 \newtheorem{thm}{Theorem}[section]
 \newtheorem{cor}[thm]{Corollary}
 \newtheorem{lem}[thm]{Lemma}
 \newtheorem{prop}[thm]{Proposition}
 \theoremstyle{definition}
 \newtheorem{defn}[thm]{Definition}
 \theoremstyle{remark}
 \numberwithin{equation}{section}
\begin{document}
%
%
%
%
%
%
%
%
%
\title[Canonical Connection on a Class of Riemannian Manifolds]
 {Canonical Connection on a Class of Rie\-man\-nian Almost Product Manifolds}

\author[D. Gribacheva]{Dobrinka
Gribacheva}
\address{%
University of Plovdiv \br Faculty of Mathematics and Informatics
\br 236 Bulgaria blvd\br Plovdiv 4003\br Bulgaria}

\email{dobrinka@uni-plovdiv.bg}

\author[D. Mekerov]{Dimitar Mekerov}

\address{%
University of Plovdiv \br Faculty of Mathematics and Informatics
\br 236 Bulgaria blvd\br Plovdiv 4003\br Bulgaria}

\email{mircho@uni-plovdiv.bg}

\thanks{This paper is partially supported by the project
NI11-FMI-004 of the Scientific Research Fund at University of
Plovdiv, Bulgaria.}

\subjclass{53C15,  53C25,  53C05,  53C07, 22E60}

\keywords{Riemannian almost product manifold, nonintegrable
structure,  canonical connection, parallel torsion,  Lie group,
Killing metric}


\begin{abstract}
The canonical connection on a Riemannian almost product manifold
is an analogue to the Hermitian connection on an almost Hermitian
manifold. In this paper we consider the canonical connection on a
class of Riemannian almost product manifolds with non-integrable
almost product structure.
\end{abstract}


\newcommand{\X}{\mathfrak{X}}
\newcommand{\B}{\mathcal{B}}
\newcommand{\s}{\mathfrak{S}}
\newcommand{\g}{\mathfrak{g}}
\newcommand{\W}{\mathcal{W}}
\newcommand{\Lgr}{\mathrm{L}}
\newcommand{\dd}{\mathrm{d}}

\newcommand{\n}{\nabla}
\newcommand{\nn}{\nabla'}
\newcommand{\lm}{\lambda}
\newcommand{\pd}{\partial}
\newcommand{\ddx}{\frac{\pd}{\pd x^i}}
\newcommand{\ddy}{\frac{\pd}{\pd y^i}}
\newcommand{\ddu}{\frac{\pd}{\pd u^i}}
\newcommand{\ddv}{\frac{\pd}{\pd v^i}}

\newcommand{\diag}{\mathrm{diag}}
\newcommand{\End}{\mathrm{End}}
\newcommand{\im}{\mathrm{Im}}
\newcommand{\id}{\mathrm{id}}

\newcommand{\ie}{i.e.}
\newfont{\w}{msbm9 scaled\magstep1}
\def\R{\mbox{\w R}}
\newcommand{\norm}[1]{\left\Vert#1\right\Vert ^2}
\newcommand{\nN}{\norm{N}}
\newcommand{\nP}{\norm{\nabla P}}
\newcommand{\tr}{{\rm tr}}

\newcommand{\nJ}[1]{\norm{\nabla J_{#1}}}
\newcommand{\thmref}[1]{Theorem~\ref{#1}}
\newcommand{\propref}[1]{Proposition~\ref{#1}}
\newcommand{\secref}[1]{\S\ref{#1}}
\newcommand{\lemref}[1]{Lemma~\ref{#1}}
\newcommand{\corref}[1]{Corollary~\ref{#1}}
\newcommand{\dfnref}[1]{Definition~\ref{#1}}


\newcommand{\ee}{\end{equation}}
\newcommand{\be}[1]{\begin{equation}\label{#1}}

\maketitle

\section{Introduction}

On an Hermitian manifold $(M,J,g)$ there exists an unique linear
connection $D$ with a torsion  $T$ such that $DJ=Dg=0$ and
$T(x,Jy)=T(Jx,y)$ for all vector fields $x$, $y$ on $M$. This is
the Hermitian connection of the manifold
(\cite{Li-1,Li-2,GrayBNV}). The group of the conformal
transformations of the metric $g$ generates the conformal group of
the transformations of $D$.

Analogously to the Hermitian connection on an almost Hermitian
manifold, V.~Mihova in \cite{Mi} define on a Riemannian almost
product manifold $(M,P,g)$ a natural con\-nection $\n'$ (i.e.
$\n'P=\n'g=0$) with torsion $T$ satisfying
$T(x,y,z)\allowbreak +T(y,z,x)+T(Px,y,Pz)+T(y,Pz,Px)=0$. This
connection is called canonical and it is proved that it is unique
on $(M,P,g)$.

The systematic development of the theory of Riemannian almost
product manifolds was started by K. Yano \cite{Ya}. In \cite{Nav}
A.~M.~Naveira gives a classification of these manifolds with
respect to the covariant differentiation of the almost product
structure. Having in mind the results in \cite{Nav}, M.~Staikova
and K.~Gribachev give in \cite{Sta-Gri} a classification of the
Riemannian almost product manifolds $(M,P,g)$ with $\tr{P}=0$. The
intersection of the basic classes in this classification is the
class $\W_0$ of the Riemannian $P$-manifolds determined by $\n
P=0$, where $\n$ is the Levi-Civita connection.

In the present work we consider the canonical connection on the
manifolds of the class $\W_3$ from the Staikova-Gribachev
classification which we called Riemannian almost product
$\W_3$-manifolds. The class $\W_3$ is the only basic class where
any manifold $(M,P,g)\notin\W_0$ has a non-integrable almost
product structure $P$. This class is an object of interest in
\cite{Mek1}.

In Section 2 we give some necessary facts about the class $\W_3$.
We introduce the  notion of a Riemannian $P$-tensor $L$
which is curvature-like and has the property
$L(x,y,Pz,Pw)=L(x,y,z,w)$. This tensor is an analogue of the
K\"ahler tensor in Hermitian geometry.

In Section 3 we recall facts about the natural connections (\ie{}
the connections preserving $P$ and $g$) with torsion on Riemannian
almost product manifolds $(M,P,g)$. We find conditions for the
torsion of such a connection when $(M,P,g)$ is a Riemannian almost
product $\W_3$-manifold.

In Section 4 we consider the canonical connection $\n'$ on a
Riemannian almost product $\W_3$-manifold  $(M,P,g)$. We find properties of the
torsion of $\n'$ as well as the exact
expression of $\n'$. Let us point out the result in
\thmref{thm-4.4}, where the tensor norm $\nP$ is given in terms of
the scalar curvatures of $\n$ and $\n'$. Due to the definitness of
the metric $g$, the important \corref{cor-4.5} is obtained.

In Section 5 we establish properties of a Riemannian almost
product $\W_3$-mani\-fold for which the curvature tensor of the canonical connection
is a  Riemannian $P$-tensor.

In Section 6 we study the case of canonical connection $\n'$ with
parallel torsion $T$ on  a Riemannian almost product $\W_3$-manifold.
The relation between the curvature tensors of $\n$ and $\n'$ is
found. An important result is given in \thmref{thm-6.4}, where
we prove that a necessary condition for the curvature tensor of $\n'$ to be
a Riemannian $P$-tensor is the parallelism of the torsion $T$ with respect to $\n'$.

In Section 7 we consider a 4-dimensional Riemannian
almost product $\W_3$-manifold $(G,P,g)$, where $G$ is a Lie
group. The properties of Riemannian almost product manifolds
$(G,P,g)$   are expressed in terms of the commutators in the
corresponding Lie algebra. At first, we find some geometrical
characteristics of the manifold $(G,P,g)$. After that, we
interpret theoretical results obtained in this paper in accordance
with the canonical connection on $(G,P,g)$.


\section{Riemannian almost product $\W_3$-manifolds}

Let $(M,P,g)$ be a \emph{Riemannian almost product manifold},
\ie{} a differentiable manifold $M$ with a tensor field $P$ of
type $(1,1)$ and a Riemannian metric $g$ such that
\begin{equation}\label{Pg}
    P^2x=x,\quad g(Px,Py)=g(x,y)
\end{equation}
for arbitrary $x$, $y$ of the algebra $\X(M)$ of the smooth vector
fields on $M$. Obviously $g(Px,y)=g(x,Py)$.

Further $x,y,z,w$ will stand for arbitrary elements of $\X(M)$ or
vectors in the tangent space $T_pM$ at $p\in M$.

In this work we consider Riemannian almost product manifolds with
$\tr{P}=0$. In this case $(M,P,g)$ is an even-dimensional
manifold. If $\dim{M}=2n$ then the \emph{associated metric}
$\tilde{g}$ of $g$, determined by $\tilde{g}(x,y)=g(x,Py)$, is an
indefinite metric of signature $(n,n)$. Since
$\tilde{g}(Px,Py)=\tilde{g}(x,y)$, the manifold $(M,P,\tilde{g})$
is a \emph{pseudo-Riemannian almost product manifold}.

The classification in \cite{Sta-Gri} of Riemannian almost product
manifolds with $\tr{P}=0$ is made with respect to the tensor  $F$
of type (0,3), defined by
\begin{equation}\label{2}
F(x,y,z)=g\left(\left(\nabla_x P\right)y,z\right),
\end{equation}
where $\nabla$ is the Levi-Civita connection of $g$. The tensor
$F$ has the following properties:
\begin{equation}\label{3}
    F(x,y,z)=F(x,z,y)=-F(x,Py,Pz),\; F(x,y,Pz)=-F(x,Py,z).
\end{equation}
The basic classes of the classification in \cite{Sta-Gri} are
$\W_1$, $\W_2$ and $\W_3$. Their intersection is the class $\W_0$
of the \emph{Riemannian $P$-manifolds}, determined by the
condition $F=0$ or equivalently $\n P=0$. In the classification
there are include the classes $\W_1\oplus\W_2$, $\W_1\oplus\W_3$,
$\W_2\oplus\W_3$ and the class $\W_1\oplus\W_2\oplus\W_3$ of all
Riemannian almost product manifolds.

In the present work we consider the manifolds from the class
$\W_3$. This class is determined by the condition
\begin{equation}\label{sigma}
    \mathop{\s}_{x,y,z} F(x,y,z)=0,
\end{equation}
where $\mathop{\s}_{x,y,z}$ is the cyclic sum by $x, y, z$. This
is the only class of the basic classes $\W_1$, $\W_2$ and $\W_3$,
where each manifold (which is not a Riemannian $P$-manifold) has a
\emph{nonintegrable almost product structure $P$}, i.e. the
Nijenhuis tensor $N$, determined by
\[
    N(x,y)=\left(\nabla_x P\right)Py-\left(\nabla_y P\right)Px
    +\left(\nabla_{Px} P\right)y
    -\left(\nabla_{Py} P\right)x,
\]
is non-zero.

In \cite{Sta-Gri} it is introduced an \emph{associated tensor}
$N^*$ by
\[
    N^*(x,y)=\left(\nabla_x P\right)Py+\left(\nabla_{Px} P\right)y
    +\left(\nabla_y P\right)Px+\left(\nabla_{Py} P\right)x.
\]
It is proved that the condition \eqref{sigma} is equivalent to
$N^*(x,y)=0$.

Further,  manifolds of the class $\W_3$ we call \emph{Riemannian
almost product $\W_3$-manifolds}.

As it is known the curvature tensor $R$ of a Riemannian manifold
with metric $g$ is determined by $
    R(x,y)z=\nabla_x \nabla_y z - \nabla_y \nabla_x z -
    \nabla_{[x,y]}z
$ and the corresponding $(0,4)$-tensor is defined as
follows $
    R(x,y,z,w)=g(R(x,y)z,w).
$

Let $(M,P,g)$ be a Riemannian almost product manifold and
$\{e_i\}$ be a basis of $T_pM$. Let the components of the inverse
matrix of $g$ with respect to $\{e_i\}$ be $g^{ij}$. Then the
quantities $\rho$ and $\tau$, determined by
$\rho(y,z)=g^{ij}R(e_i,y,z,e_j)$ and $\tau=g^{ij}\rho(e_i,e_j)$,
are the Ricci tensor and the scalar curvature for $\n$,
respectively.

The \emph{square norm} of $\nabla P$ is defined by
\begin{equation}\label{5}
\nP=g^{ij}g^{ks}g\left(\left(\nabla_{e_i}P\right)e_k,\left(\nabla_{e_j}P\right)e_s\right).
\end{equation}
Obviously $\nP=0$ if and only if $(M,P,g)$ is a Riemannian
$P$-manifold.

A tensor $L$ of type (0,4) with pro\-per\-ties%
\be{2.11}%
L(x,y,z,w)=-L(y,x,z,w)=-L(x,y,w,z),
\ee %
\be{2.12} \mathop{\s} \limits_{x,y,z} L(x,y,z,w)=0 \quad
\textit{(the first Bianchi identity),}
\ee %
\begin{equation}\label{2.13}
L(x,y,Pz,Pw)=L(x,y,z,w),
\end{equation}
is called a \emph{Riemannian $P$-tensor}.


\section{Natural connection on Riemannian almost product manifolds}

The linear connections in our investigations have a torsion.

Let $\nn$ be a linear connection determined by $\nn_x y=\n_x y+Q(x,y)$, where $Q$ is a (1,2)-tensor.
The torsion (1,2)-tensor $T$ is determined by
$T(x,y)=\nn_x y-\nn_y x-[x,y]$.
The corresponding (0,3)-tensors are defined by
\begin{equation}\label{3.1}
    Q(x,y,z)=g(Q(x,y),z), \quad T(x,y,z)=g(T(x,y),z).
\end{equation}
The symmetry of the Levi-Civita connection implies
\begin{equation}\label{3.2}
    T(x,y)=Q(x,y)-Q(y,x),
\end{equation}
\[
    T(x,y)=-T(y,x).
\]

A partial decomposition of the space $\mathcal{T}$ of the torsion
tensors $T$ of type (0,3) is valid on a Riemannian  almost product
manifold $(M,P,g)$:
$\mathcal{T}=\mathcal{T}_1\oplus\mathcal{T}_2\oplus\mathcal{T}_3\oplus\mathcal{T}_4$,
where $\mathcal{T}_i$ $(i=1,2,3,4)$ are invariant orthogonal
subspaces \cite{Mi}. For the projection operators $p_i$ of
$\mathcal{T}$ in $\mathcal{T}_i$ is established:
\begin{equation*}
  \begin{split}
&
    p_1(x,y,z)=\frac{1}{8}\bigl\{2T(x,y,z)-T(y,z,x)-T(z,x,y)-T(Pz,x,Py)\bigr.\\[4pt]
& \phantom{p_1(x,y,z)=\frac{1}{8}}
    +T(Py,z,Px)+T(z,Px,Py)-2T(Px,Py,z)\\[4pt]
& \phantom{p_1(x,y,z)=\frac{1}{8}}
    +T(Py,Pz,x)+T(Pz,Px,y)-T(y,Pz,Px)\bigr\},\\[4pt]
\end{split}
\end{equation*}
\begin{equation*}
  \begin{split}
&
    p_2(x,y,z)=\frac{1}{8}\bigl\{2T(x,y,z)+T(y,z,x)+T(z,x,y)+T(Pz,x,Py)\bigr.\\[4pt]
&  \phantom{p_2(x,y,z)=\frac{1}{8}}
    -T(Py,z,Px)-T(z,Px,Py)-2T(Px,Py,z)\\[4pt]
& \phantom{p_2(x,y,z)=\frac{1}{8}}
    -T(Py,Pz,x)-T(Pz,Px,y)+T(y,Pz,Px)\bigr\},\\[4pt]
  &
    p_3(x,y,z)=\frac{1}{4}\bigl\{T(x,y,z)+T(Px,Py,z)-T(Px,y,Pz)\bigr.\\[4pt]
&
 \phantom{p_3(x,y,z)=\frac{1}{4}}
    \bigl.-T(x,Py,Pz)\bigr\},\\[4pt]
  &
    p_4(x,y,z)=\frac{1}{4}\bigl\{T(x,y,z)+T(Px,Py,z)+T(Px,y,Pz)\bigr.\\[4pt]
& \phantom{p_1(x,y,z)=\frac{1}{4}}
    \bigl.+T(x,Py,Pz)\bigr\}.
  \end{split}
\end{equation*}

\begin{defn}[\cite{Mi}]\label{dfn-3.1'}
A linear connection $\nn$ on a Riemannian almost product manifold
$(M,P,g)$ is called a \emph{natural connection} if $\nn P=\nn g=0$
(or equi\-va\-lently $\nn g=\nn \tilde{g}=0$).
\end{defn}
If $\nn$ is a linear connection with a (0,3)-tensor $Q$ on a Riemannian almost product
manifold, then it is  a natural connection if and only if the
following conditions are valid \cite{Mi}:
\begin{equation}\label{3.5}
    F(x,y,z)=Q(x,y,Pz)-Q(x,Py,z),
\end{equation}
\begin{equation}\label{3.6}
    Q(x,y,z)=-Q(x,z,y).
\end{equation}

Let $\Phi$ be the (0,3)-tensor determined by
\[
    \Phi(x,y,z)=g\left(\widetilde{\nabla}_x y - \n_x y,z \right),
\]
where $\widetilde{\nabla}$ is the Levi-Civita connection of the
associated metric $\tilde{g}$.

\begin{thm}[\cite{Mi}]\label{t-3.1}
A linear connection with the torsion  $T$ on a Riemannian almost
product manifold $(M,P,g)$ is natural if and only if
\begin{equation}\label{3.9}
  \begin{array}{l}
    4p_1(x,y,z)=-\Phi(x,y,z)+\Phi(y,z,x)-\Phi(x,Py,Pz)\\[4pt]
    \phantom{4p_1(x,y,z)=}-\Phi(y,Pz,Px)+2\Phi(z,Px,Py),
  \end{array}
\end{equation}
\begin{equation}\label{3.8}
    4p_3(x,y,z)=-g(N(x,y),z)=-2\left\{\Phi(z,Px,Py)+\Phi(z,x,y)\right\}.
\end{equation}
\end{thm}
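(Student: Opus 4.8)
The plan is to translate the condition ``natural'' into one on the torsion $T$ in three steps. First I would express naturality through the potential $Q$ of $\n\to\nn$. Writing $\nn=\n+Q$, the connection $\nn$ is natural iff $Q$ satisfies \eqref{3.5} and \eqref{3.6} (recalled above). Since $\n g=0$, condition \eqref{3.6} is exactly $\nn g=0$, and by the standard argument for metric connections \eqref{3.6} together with \eqref{3.3} forces
\begin{equation}\label{pr-Q}
2Q(x,y,z)=T(x,y,z)-T(y,z,x)+T(z,x,y).
\end{equation}
Conversely, for any $T$ with $T(x,y,z)=-T(y,x,z)$ the tensor $Q$ defined by \eqref{pr-Q} satisfies \eqref{3.6} and reproduces $T$ through \eqref{3.2}. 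Hence $\nn$ (with torsion $T$) is natural iff the $Q$ of \eqref{pr-Q} also satisfies \eqref{3.5}, that is
\begin{equation}\label{pr-nat}
F(x,y,z)=Q(x,y,Pz)-Q(x,Py,z)
\end{equation}
with $Q$ as in \eqref{pr-Q}; one checks that the right-hand side of \eqref{pr-nat}, as a function of $(x,y,z)$, automatically obeys the symmetries \eqref{3} of $F$. Everything thus reduces to rewriting \eqref{pr-nat} in terms of $\mathcal{T}=\mathcal{T}_1\oplus\mathcal{T}_2\oplus\mathcal{T}_3\oplus\mathcal{T}_4$.

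Second I would set up the dictionary between $F$ and $\Phi$. Since $\widetilde{\nabla}$ and $\n$ are the Levi-Civita connections of $\tilde g$ and $g$, the tensor $S(x,y)=\widetilde{\nabla}_x y-\n_x y$ is symmetric, so $\Phi(x,y,z)=\Phi(y,x,z)$; and expanding $\widetilde{\nabla}\tilde g=0$ with the help of $\n g=0$ and $\tilde g=g(\cdot,P\cdot)$ gives
\begin{equation}\label{pr-PhiF}
\Phi(x,y,Pz)+\Phi(x,z,Py)=F(x,y,z),
\end{equation}
while comparing the two Koszul formulas yields an explicit expression of $\Phi$ through $F$. Feeding the latter into $g(N(x,y),z)=F(x,Py,z)-F(Px,y,z)+F(y,Px,z)-F(Py,x,z)$, which is immediate from \eqref{4'}, and simplifying by \eqref{3} produces the $\Phi$-form of $-g(N(x,y),z)$ appearing in \eqref{3.8}. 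These identities are what let one pass freely between $F$, $\Phi$ and $N$ in the final formulas.

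Third comes the decomposition bookkeeping. Substituting \eqref{pr-Q} into the right-hand side of \eqref{pr-nat} and comparing with the explicit projectors $p_i$ listed above, the two facts to verify are: (a) the map $T\mapsto\bigl((x,y,z)\mapsto Q(x,y,Pz)-Q(x,Py,z)\bigr)$ annihilates $\mathcal{T}_2\oplus\mathcal{T}_4$, so that \eqref{pr-nat} constrains $T$ only through $p_1(T)$ and $p_3(T)$; and (b) on $\mathcal{T}_1\oplus\mathcal{T}_3$ this map is a bijection onto the space of tensors obeying \eqref{3}. Granting (a) and (b), equation \eqref{pr-nat} has, for the prescribed $F$, a unique solution for the pair $(p_1(T),p_3(T))$; computing that solution and rewriting it through \eqref{pr-PhiF} gives precisely \eqref{3.9} and \eqref{3.8}, which settles the ``only if'' direction. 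The ``if'' direction is then immediate from (a): if $T$ satisfies \eqref{3.9} and \eqref{3.8}, replacing $T$ by the solution of (b) leaves the right-hand side of \eqref{pr-nat} unchanged, and there it equals $F$. The main obstacle is exactly (a) and (b) --- a long but routine manipulation using only \eqref{3.3}, \eqref{3.6} and the $P$-identities \eqref{3} --- together with the one delicate point of confirming that $F$ ranges over \emph{exactly} the tensors with the symmetries \eqref{3}, so that (b) is a genuine isomorphism and the formulas \eqref{3.9}, \eqref{3.8} carry content on every Riemannian almost product manifold.
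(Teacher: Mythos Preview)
The paper does not supply a proof of this theorem: it is quoted verbatim from \cite{Mi} and used as a black box, so there is no ``paper's own proof'' to compare against. Your plan is therefore being assessed on its internal merits.

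The strategy is sound and standard. Reducing naturality to the single scalar equation \eqref{3.5} on the potential $Q$ given by the Hayden formula \eqref{pr-Q} is exactly the right move (the paper itself records this formula later as \eqref{6.2}), and your observation that the right-hand side of \eqref{pr-nat} automatically has the symmetries \eqref{3} is correct and easily checked from $T(x,y,z)=-T(y,x,z)$ and $P^2=\id$. The dictionary \eqref{pr-PhiF} is precisely the paper's \eqref{1.14a}, and the symmetry $\Phi(x,y,z)=\Phi(y,x,z)$ follows as you say from both $\n$ and $\widetilde\nabla$ being torsion-free.

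The only substantive burden you have not discharged is your pair (a)--(b): that the linear map $T\mapsto\bigl(Q(\cdot,\cdot,P\cdot)-Q(\cdot,P\cdot,\cdot)\bigr)$ kills $\mathcal T_2\oplus\mathcal T_4$ and is an isomorphism from $\mathcal T_1\oplus\mathcal T_3$ onto the space of tensors with symmetries \eqref{3}. This is indeed a finite computation from the explicit projectors $p_i$ displayed in the paper, but it is the entire content of the theorem, and calling it ``long but routine'' is where a referee of \cite{Mi} would ask for details. In particular, the injectivity half of (b) is what guarantees that \eqref{3.9}--\eqref{3.8} are \emph{necessary} and not merely sufficient; you should at least indicate why the kernel on $\mathcal T_1\oplus\mathcal T_3$ is trivial (e.g.\ by a dimension count or by exhibiting an explicit inverse). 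With that filled in, the argument is complete.
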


\begin{prop}\label{prop-3.3}
    For the torsion  $T$  of a natural connection on a Riemannian almost product  $\W_3$-manifold
    $(M,P,g)\notin\W_0$, the following properties
    are valid
    \[
        p_1=0,\qquad p_3\neq 0.
    \]
\end{prop}
\begin{proof} In \cite{Sta-Gri} it is proved that the
both basic tensors $F$ and $\Phi$ on a Riemannian almost product
manifold $(M,P,g)$ are related as follows:
\begin{equation}\label{1.14'''}
    \Phi(x,y,z)=\frac{1}{2}\{-F(Pz,x,y)+F(x,y,Pz)+F(y,Pz,x)\}.
\end{equation}

If $(M,P,g)$ is a Riemannian almost product $\W_3$-manifold then
\eqref{3}, \eqref{sigma} and \eqref{1.14'''} imply
\begin{equation}\label{3.11}
    \Phi(x,y,z)=-F(Pz,x,y).
\end{equation}

From equalities \eqref{3.11}, \eqref{3.9}, \eqref{3},
\eqref{sigma} we get $p_1=0$. Since $N\neq 0$ for a Riemannian
almost product $\W_3$-manifold $(M,P,g)\notin \W_0$, then
\eqref{3.8} implies $p_3\neq 0$. \end{proof}


\section{Canonical connection on Riemannian almost product $\W_3$-ma\-ni\-folds }

\begin{defn}[\cite{Mi}]\label{dfn-4.1}
A natural connection with torsion  $T$ on %
a Riemannian almost product manifold  %
$(M,P,g)$ is called a \emph{canonical connection} if
\begin{equation}\label{4.1}
    T(x,y,z)+T(y,z,x)+T(Px,y,Pz)+T(y,Pz,Px)=0.
\end{equation}
\end{defn}

In \cite{Mi} it is shown that \eqref{4.1} is equivalent to the
condition
\begin{equation}\label{4.2}
        p_2=p_4=0,
\end{equation}
\ie{} to the condition $T\in\mathcal{T}_1\oplus\mathcal{T}_3$. The
same paper shows that on every Riemannian almost product manifold
$(M,P,g)$ there exists an unique canonical connection $\nn$, and
it is determined by
\begin{equation}\label{4.3}
    g(\nn_x y,z)=g(\n_x
    y,z)+\frac{1}{4}\left\{\Phi(x,y,z)-2\Phi(z,x,y)-\Phi(x,Py,Pz)\right\}.
\end{equation}
For the torsion $T$ of this connection it is valid
\begin{equation}\label{4.3'}
\begin{split}
    T(x,y,z)=\frac{1}{4}\bigl\{\Phi(y,z,x)&-\Phi(z,x,y)\\[4pt]
&+\Phi(y,Pz,Px)+\Phi(Pz,x,Py)\bigr\}.
\end{split}
\end{equation}

\begin{prop}\label{prop-4.1'}
    Let $T$ be the torsion of the canonical connection $\n'$ on a Riemannian almost product
    $\W_3$-manifold $(M,P,g)$. Then $T$ has the properties
\begin{equation}\label{4.4}
    T(Px,y)=-PT(x,y),
\end{equation}
\begin{equation}\label{4.5}
T(Px,y,z)=T(x,Py,z)=-T(x,y,Pz),
\end{equation}
\begin{equation}\label{30''}
\begin{split}
T\bigl(T(Px,Py),z,w\bigr)&=T\bigl(T(Px,y),z,Pw\bigr)
\\[4pt]
&=T\bigl(T(x,Py),z,Pw\bigr)=T\bigl(T(x,y),z,w\bigr),
\end{split}
\end{equation}
\begin{equation}\label{4.8'}
T=p_3, \; \text{\ie{}}\;
T\in\mathcal{T}_3
\end{equation}
and $\n'$ is determined by
\begin{equation}\label{4.7}
    \nn_x y=\n_x y+\frac{1}{4}\left\{-\left(\n_y P\right)Px
    +\left(\n_{Py} P\right)x-2\left(\n_{x} P\right)Py\right\}.
\end{equation}
\end{prop}
\begin{proof} By virtue of \eqref{4.3'}, \eqref{3.11},
\eqref{3} and \eqref{sigma} we obtain \eqref{4.4}. Then,
\eqref{4.4}, \eqref{3.1} and \eqref{Pg} imply \eqref{4.5} and
\eqref{30''}. From \propref{prop-3.3} and condition \eqref{4.2} we obtain
immediately \eqref{4.8'}.
Equalities \eqref{3.11} and \eqref{4.3} imply \eqref{4.7}.
\end{proof}

Let $\nn$ be the canonical connection on a Riemannian almost
product $\W_3$-manifold  $(M,P,g)$. According to \eqref{4.7}, for
the tensor $Q$  and the
torsion $T$ of $\n'$ we have
\begin{equation}\label{4.8}
    Q(x,y)=\frac{1}{4}\left\{-\left(\n_y P\right)Px+\left(\n_{Py} P\right)x-2\left(\n_{x}
    P\right)Py\right\},
\end{equation}
\begin{equation}\label{32'}
    T(x,y)=-\frac{1}{2}\left\{\left(\n_x P\right)Py+\left(\n_{Px} P\right)y\right\}.
\end{equation}

Hence, having in mind \eqref{32'}, \eqref{2} and \eqref{3.1}, we
obtain
\begin{equation}\label{4.9}
    T(x,y,z)=-\frac{1}{2}\left\{F(x,Py,z)+F(Px,y,z)\right\}.
\end{equation}
Substituting $y\leftrightarrow z$ into the above, according to
\eqref{3}, we get
\begin{equation}\label{33'}
    T(x,z,y)=\frac{1}{2}\left\{F(x,Py,z)-F(Px,y,z)\right\}.
\end{equation}
Subtracting \eqref{33'}  from \eqref{4.9} and replacing $y$ with
$Py$ in the result, we have
\begin{equation}\label{4.10}
    F(x,y,z)=T(x,z,Py)-T(x,Py,z).
\end{equation}

Equalities \eqref{4.8}, \eqref{3.1} and \eqref{2} imply
\begin{equation}\label{4.11}
    Q(x,y,z)=-\frac{1}{4}\left\{F(y,Px,z)-F(Py,x,z)+2F(x,Py,z)\right\}.
\end{equation}
Hence, because of \eqref{3} and \eqref{sigma}, we conclude that
\begin{equation}\label{4.12}
    Q(x,y,z)=-Q(y,x,z)-F(Pz,x,y).
\end{equation}

\begin{thm}\label{thm-4.4}
    Let $\tau'$ and $\tau$ be the scalar curvatures for the canonical connection $\nn$ and the Levi-Civita connection $\n$, respectively,
    on a Riemannian almost product $\W_3$-manifold $(M,P,g)$.
     Then
    \begin{equation}\label{4.13}
       \nP=8(\tau'-\tau).
    \end{equation}
\end{thm}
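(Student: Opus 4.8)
The plan is to compare the curvature tensors of $\nn$ and $\n$. Writing $\nn=\n+Q$ with $Q$ as in \eqref{4.8}, the torsion-freeness of $\n$ gives the standard identity
\[
R'(x,y)z=R(x,y)z+(\n_xQ)(y,z)-(\n_yQ)(x,z)+Q\bigl(x,Q(y,z)\bigr)-Q\bigl(y,Q(x,z)\bigr),
\]
and since $\nn g=0$ the associated $(0,4)$-tensor $R'(x,y,z,w)=g(R'(x,y)z,w)$ is skew in $z,w$, so $\tau'$ arises from $R'$ by the same double $g$-trace that produces $\tau$ from $R$. Taking that double trace, I expect the two terms carrying $\n Q$ to drop out: after the contractions they become covariant derivatives of the $1$-forms $z\mapsto g^{ij}Q(e_i,e_j,z)$ and $z\mapsto g^{ij}Q(e_i,z,e_j)$, and both of these vanish. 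Indeed, by \eqref{4.11} they are built from the traces $g^{ij}F(e_i,e_j,\cdot)$, $g^{ij}F(e_i,\cdot,e_j)$, $g^{ij}F(\cdot,e_i,e_j)$; the last equals $\tr(\n_\cdot P)=\n_\cdot(\tr P)=0$, and then \eqref{3} together with the defining identity \eqref{sigma} forces the first two to vanish as well. Hence only the quadratic term survives and, after relabelling in an orthonormal frame,
\[
\tau'-\tau=-\sum_{p,q,r}Q_{pqr}Q_{qrp},\qquad Q_{pqr}:=Q(e_p,e_q,e_r).
\]

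Next I would pass to the torsion $T$ of $\nn$. From \eqref{3.2} and \eqref{3.6} one has $Q(x,y,z)=\tfrac12\{T(x,y,z)-T(y,z,x)+T(z,x,y)\}$; substituting this and using $\sum T_{pqr}^2=\sum T_{qrp}^2=\sum T_{rpq}^2=\norm{T}$, a short expansion gives
\[
\tau'-\tau=\tfrac14\norm{T}-\tfrac12\,C,\qquad C:=\sum_{p,q,r}T_{pqr}T_{qrp}.
\]
So the theorem reduces to the two identities $C=0$ and $\norm{T}=\tfrac12\nP$.

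For $C=0$: by \eqref{4.5} the first factor satisfies $T(e_p,e_q,e_r)=T(Pe_p,Pe_q,e_r)$, and since $P$ is a $g$-isometry with $P^2=\id$, the two factors $P$ can be moved through the summations over $p$ and $q$ onto the second factor; the latter then becomes $T(Pe_q,e_r,Pe_p)$, which by two further applications of \eqref{4.5} equals $-T(e_q,e_r,e_p)$. Thus $C=-C$, i.e.\ $C=0$. (Equivalently, in a $P$-adapted orthonormal basis \eqref{4.5} shows that $T_{pqr}$ is non-zero only when $e_p,e_q$ lie in one eigenspace of $P$ and $e_r$ in the other, so $T_{pqr}$ and $T_{qrp}$ have disjoint supports.)

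Finally, for $\norm{T}=\tfrac12\nP$ I would use \eqref{4.9}, which gives $\norm{T}=\tfrac14\,\norm{F(\cdot,P\cdot,\cdot)+F(P\cdot,\cdot,\cdot)}$. Expanding, each of the two square terms equals $\nP$ (the single factor $P$ being absorbed through one contraction, again because $P^2=\id$ and $P$ is a $g$-isometry), while the cross term $\sum_{p,q,r}F(e_p,Pe_q,e_r)F(Pe_p,e_q,e_r)$ vanishes: transferring the $P$'s and then applying \eqref{3} and \eqref{sigma} rewrites it as $\langle F,F^{\mathrm{cyc}}\rangle-\langle F,F^{\mathrm{cyc}}\rangle=0$, where $F^{\mathrm{cyc}}(x,y,z)=F(y,z,x)$ (in a $P$-adapted basis this cross term is $\sum\varepsilon_p\varepsilon_q F_{pqr}^2$, and the two eigenspace configurations supporting $\nP$ enter with equal magnitude but opposite signs, by $F(x,y,z)=F(x,z,y)$). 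Hence $\norm{T}=\tfrac14(\nP+\nP)=\tfrac12\nP$, and combining with the previous paragraph $\tau'-\tau=\tfrac14\cdot\tfrac12\nP=\tfrac18\nP$. I expect the main obstacle to be precisely these last two reductions: the double trace of the curvature identity and the identities $C=0$, $\norm{T}=\tfrac12\nP$ all require careful index bookkeeping with the sign pattern of \eqref{4.5} and \eqref{3}, for which the $P$-adapted orthonormal basis is the cleanest device.
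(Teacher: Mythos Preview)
Your argument is correct and reaches the same endpoint as the paper, but the route to the quadratic term differs. Both proofs start from the standard curvature identity for $\nn=\n+Q$, kill the derivative terms via the vanishing traces $g^{ij}Q(e_i,e_j,\cdot)=0$ and $g^{ij}Q(e_i,\cdot,e_j)=0$, and are left with the single contraction $\sum Q_{pqr}Q_{qpr}$. From there the paper substitutes the explicit formula \eqref{4.8} for $Q$ and expands the resulting ``$A_{jk}$'' bilinear directly, invoking \eqref{5} and the $\W_3$-identity \eqref{6} to obtain $\tfrac18\nP$. You instead pass to the torsion by Hayden's formula $Q=\tfrac12(T-T^{\mathrm{cyc}}+T^{\mathrm{cyc}^2})$ and exploit the symmetry pattern \eqref{4.5} to show the cyclic cross term $C=\sum T_{pqr}T_{qrp}$ vanishes, reducing everything to $\norm{T}=\tfrac12\nP$, which you read off from \eqref{4.9}. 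Your path makes the role of the $\W_3$-specific torsion symmetries \eqref{4.5} more transparent; the paper's direct expansion is shorter but hides where $\W_3$ is actually used. One remark: your first explanation for the vanishing of the cross term in $\norm{T}$ (``$\langle F,F^{\mathrm{cyc}}\rangle-\langle F,F^{\mathrm{cyc}}\rangle$'') is too vague to stand on its own, but the parenthetical $P$-adapted basis argument (pairing the $(q,r)$-swap symmetry of $F$ against the sign $\varepsilon_p\varepsilon_q$) is correct and is what should be kept.
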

\begin{proof} According to \eqref{Pg} and \eqref{3},
for a Riemannian almost product manifold  we have
$g^{ij}F(Pz,e_i,e_j)=0$. Then, from \eqref{4.12}, after
contraction by $x=e_i$, $y=e_j$, we obtain
\begin{equation}\label{4.14}
    g^{ij}Q(e_i,e_j,z)=0.
\end{equation}
Because of $\n g^{ij}=0$  and \eqref{4.14}, we get
\begin{equation}\label{4.15}
    g^{ij}\left(\n_x Q\right)(e_i,e_j,z)=0.
\end{equation}

It is known that for the curvature tensors $R'$ and $R$ of $\nn$
and $\n$, respectively,  the following  is valid:
\[
\begin{split}
&R'(x,y,z,w)=R(x,y,z,w)+ \left(\n_x Q\right)(y,z,w)-\left(\n_y
Q\right)(x,z,w)
\\[4pt]
&\phantom{K(x,y,z,w)=R(x,y,z,w)}+Q\bigl(x,Q(y,z),w\bigr)-Q\bigl(y,Q(x,z),w\bigr).
\end{split}
\]
Then from \eqref{3.6} and \eqref{3.1} it follows that
\begin{equation}\label{4.16}
\begin{split}
&R'(x,y,z,w)=R(x,y,z,w)+ \left(\n_x Q\right)(y,z,w)-\left(\n_y
Q\right)(x,z,w)
\\[4pt]
&\phantom{K(x,y,z,w)=}-g\bigl(Q(x,w),Q(y,z)\bigr)+g\bigl(Q(y,w),Q(x,z)\bigr)
\end{split}
\end{equation}
for a Riemannian almost product manifold  $(M,P,g)$. Using a
contraction by $x=e_i$, $w=e_j$ in \eqref{4.16} and combining
\eqref{3.6}, \eqref{4.14} and \eqref{4.15}, we find that the Ricci
tensors $\rho'$ and $\rho$ for $\nn$ and $\n$ satisfy
\begin{equation}\label{4.17}
    \rho'(y,z)=\rho(y,z)+g^{ij}\left(\n_{e_i}
    Q\right)(y,z,e_j)+g^{ij}g\bigl(Q(y,e_j),Q(e_i,z)\bigr).
\end{equation}

Similarly, after a contraction by $y=e_k$, $z=e_s$ in \eqref{4.17}
and according to \eqref{4.15}, we obtain
\begin{equation}\label{4.18}
    \tau'=\tau+g^{ij}g^{ks}g\bigl(Q(e_k,e_j),Q(e_i,e_s)\bigr)
\end{equation}
for the scalar curvatures $\tau'$ and $\tau$ for $\nn$ and $\n$.

The equalities \eqref{4.18} and \eqref{4.8} imply on a Riemannian
$\W_3$-manifold $(M,P,g)$ the following equality
\begin{equation}\label{4.19}
    g^{ij}g^{ks}g\bigl(Q(e_k,e_j),Q(e_i,e_s)\bigr) =\frac{1}{16}g^{ij}g^{ks}g(A_{jk},A_{si})
\end{equation}
where
\[
A_{jk}=-\left(\n_{e_j}P\right)Pe_k+\left(\n_{Pe_j}P\right)e_k-2\left(\n_{e_k}P\right)Pe_j.
\]
From \eqref{4.19}, \eqref{Pg} and \eqref{5} we get
\[
g^{ij}g^{ks}g\bigl(Q(e_k,e_j),Q(e_i,e_s)\bigr)=\frac{1}{8}\nP.
\]
The last equality and \eqref{4.18} imply \eqref{4.13}.
\end{proof}

\begin{cor}\label{cor-4.5}
A Riemannian almost product  $\W_3$-manifold is a Riemannian
$P$-mani\-fold if and only if the scalar curvatures for the
canonical connection and the Levi-Civita connection are equal.

\end{cor}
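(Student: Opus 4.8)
The plan is to read the statement straight off \thmref{thm-4.4}. By \eqref{4.13} one has $\tau'-\tau=\tfrac18\nP$, so the equality $\tau'=\tau$ holds if and only if $\nP=0$. It then suffices to invoke the elementary observation recorded right after \eqref{5}: since $g$ is positive definite, $\nP$ is a sum of squared norms of the vectors $\left(\n_{e_i}P\right)e_k$, so $\nP=0$ is equivalent to $\n P=0$, \ie{} to $(M,P,g)$ lying in $\W_0$, the class of Riemannian $P$-manifolds. Chaining the two equivalences $\tau'=\tau\iff\nP=0\iff\n P=0$ yields the corollary.

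For completeness one can verify the trivial direction without appealing to \eqref{4.13}: if $(M,P,g)$ is a Riemannian $P$-manifold then $\n P=0$, hence by \eqref{4.8} the deformation tensor $Q$ vanishes identically, so $\nn=\n$ and all associated curvature quantities — in particular the scalar curvatures — coincide, giving $\tau'=\tau$. The substantive content of the corollary is therefore the converse, and this is exactly what the sign-definite correction term $\tfrac18\nP$ in \eqref{4.13} delivers.

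There is no real obstacle here; the corollary is a one-line consequence of \thmref{thm-4.4} together with the fact that a sum of squared norms with respect to a Riemannian metric vanishes only when each summand does. The single point worth stating explicitly is that this last step uses the positive-definiteness of $g$ (and not that of the indefinite associated metric $\tilde g$), which is guaranteed by the standing assumption that $(M,P,g)$ is a Riemannian almost product manifold.
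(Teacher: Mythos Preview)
Your argument is correct and matches the paper's approach exactly: the corollary is stated with no proof beyond the $\Box$, as an immediate consequence of \thmref{thm-4.4} together with the observation following \eqref{5} that $\nP=0$ iff $(M,P,g)$ is a Riemannian $P$-manifold. Your additional remarks on the trivial direction and on the role of positive-definiteness of $g$ are valid elaborations but not needed.
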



\section{Canonical connection
on a Riemannian almost product $\W_3$-manifold whose curvature tensor is a Rie\-man\-nian $P$-ten\-sor}

It is known (\cite{KoNo}) that for every linear connection $\nn$
on a Riemannian manifold $(M,g)$ with a torsion $T$ and a
curvature tensor $R'$ the following equality (the first Bianchi
identity) is valid
\[
\mathop{\s} \limits_{x,y,z} R'(x,y)z=\mathop{\s} \limits_{x,y,z}
\bigl\{\left(\nn_{x} T\right)(y,z)+T(T(x,y),z)\bigr\}.
\]

Let $\nn$ is a natural connection on a Riemannian almost product
manifold $(M,P,g)$. Then the latter equality and $\nn g=0$ imply
\begin{equation}\label{44''}
\mathop{\s} \limits_{x,y,z} R'(x,y,z,w)= \mathop{\s}
\limits_{x,y,z} \bigl\{\left(\nn_{x}
T\right)(y,z,w)+T(T(x,y),z,w)\bigr\}.
\end{equation}

For the curvature tensor $R'$ the condition \eqref{2.11} is valid.
Then $R'$ is a Riemannian $P$-tensor if the conditions
\eqref{2.12} and \eqref{2.13} are satisfy for $R'$, too. Since
$\nn P=0$ for the natural connection $\nn$ then \eqref{2.13} is
valid. The condition \eqref{2.12} for $R'$ is satisfied according
to \eqref{44''} if and only if the following equality is valid
\begin{equation}\label{5.1}
\mathop{\s} \limits_{x,y,z} \bigl\{\left(\nn_{x}
T\right)(y,z,w)+T(T(x,y),z,w)\bigr\}=0.
\end{equation}

Now, let us consider the case when $(M,P,g)$ belongs to the class
$\W_3$.
\begin{prop}\label{prop-5.1'}
If the curvature tensor of the canonical connection $\n'$ on a Riemannian almost product
$\W_3$-manifold $(M,P,g)$ is a Riemannian $P$-tensor, then the following identity for the torsion $T$ of $\n'$ is
valid
\begin{equation}\label{45'}
T(T(x,y),z,w)=0.
\end{equation}
\end{prop}
\begin{proof} We substitute $Pz$ for $z$ and $Pw$ for
$w$  in \eqref{5.1}. Hence, according to \eqref{4.5}, we obtain
\[
\begin{split}
&\left(\nn_{x} T\right)(y,z,w)-\left(\nn_{y}
T\right)(z,x,w)+\left(\nn_{Pz} T\right)(x,y,Pw)\\[4pt]
&+T(T(x,y),z,w)+T(T(y,Pz),x,w)+T(T(Pz,x),y,Pw)=0.
\end{split}
\]

We add the last equality to \eqref{5.1}, and substitute $Px$ for
$x$ and $Pw$ for $w$ in the result. Then, using \eqref{4.5}, we
get
\begin{equation}\label{5.2}
\begin{split}
&\left(\nn_{z} T\right)(x,y,z)-\left(\nn_{Pz}
T\right)(x,Py,w)\\[4pt]
&+2T(T(y,z),x,w)+2T(T(z,x),y,w)=0.
\end{split}
\end{equation}

In the latter equality we substitute $Py$, $Pz$ for $y$, $z$,
respectively, and we apply \propref{prop-4.1'}. We add the
obtained equality to \eqref{5.2} and this leads to \eqref{45'}.
\end{proof}

Let the curvature tensor of the canonical connection on
 a Riemannian almost product $\W_3$-manifold $(M,P,g)$
is a Riemannian $P$-tensor. Then \eqref{45'} is valid.
According to \eqref{45'}, \eqref{4.10} and the properties of $T$
from \propref{prop-4.1'}, we get
\[
F(Py,w,T(z,x))=-T(y,w,T(z,x)).
\]
Then, using \eqref{2} and \eqref{3.1} we obtain
\begin{equation}\label{5.3}
    g\bigl(T(x,z),T(y,w)+\left(\n_{Py}P\right)w\bigr)=0.
\end{equation}

Since, according to \eqref{32'}, we have
\[
T(y,w)=-\frac{1}{2}\big\{\left(\n_{y}P\right)Pw+\left(\n_{Py}P\right)w
\bigr\}
\]
and therefore the following equality is valid
\[
T(y,w)+\left(\n_{Py}P\right)w=-\frac{1}{2}\bigl\{\left(\n_{y}P\right)Pw-\left(\n_{Py}P\right)w
\bigr\}.
\]
By virtue of the latter two equalities and \eqref{5.3}, we arrive
at the following
\begin{thm}\label{thm-5.2}
If the curvature tensor of the canonical connection $\n'$ on a Riemannian almost product
$\W_3$-manifold $(M,P,g)$ is a Riemannian $P$-tensor, then the following identity for the torsion $T$ of $\n'$ is
valid
\[
g\bigl(\left(\n_{x}P\right)Pz+\left(\n_{Px}P\right)z,\left(\n_{Py}P\right)w-\left(\n_{y}P\right)Pw
\bigr)=0.
\]
\end{thm}


\section{Canonical connection with parallel torsion on a Riemannian almost product  $\W_3$-manifold }

In this section we consider a canonical connection $\nn$ with
parallel torsion $T$ with respect to $\nn$ (\ie{} $\nn T=0$) on a Riemannian almost
product $\W_3$-manifold  $(M,P,g)$.

According to the Hayden theorem (\cite{Hay}) for any natural
connection we have
\begin{equation}\label{6.2}
    Q(x,y,z)=\frac{1}{2}\bigl\{
    T(x,y,z)-T(y,z,x)+T(z,x,y)\bigr\}.
\end{equation}

Combining this with \eqref{3.2}, \eqref{3.5}, \eqref{4.11}, leads
to the following
\begin{prop}\label{prop-6.1}
Let $\nn$ be a natural connection on a Riemannian almost product
$\W_3$-manifold  $(M,P,g)$. Then the tensors $T$, $Q$ and $F$ are
parallel or non-parallel at the same time with respect to $\nn$.

\end{prop}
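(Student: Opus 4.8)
The claim in Proposition~\ref{prop-6.1} is that, for a natural connection $\nn$ on a Riemannian almost product manifold, the tensors $T$, $Q$ and $F$ are simultaneously $\nn$-parallel. The plan is to exhibit each of these three tensors as a $\nn$-parallel linear combination (with constant coefficients, built only from the algebraic operations available) of the others, so that the vanishing of any one of $\nn T$, $\nn Q$, $\nn F$ forces the vanishing of the other two. Since $\nn$ is a tensor derivation, $\nn(\text{linear combination}) = \text{linear combination of } \nn(\cdot)$, and the structure tensors $P$ and $g$ are themselves $\nn$-parallel (a natural connection satisfies $\nn P = \nn g = 0$), so every bracket, every contraction, every application of $P$ commutes with $\nn$.

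First I would show $\nn T = 0 \iff \nn Q = 0$. By the Hayden formula \eqref{6.2}, $Q$ is expressed as a fixed linear combination of $T$ with its arguments permuted; differentiating covariantly by $\nn$ and using that permutation of arguments commutes with $\nn$ gives $\nn Q$ as the same linear combination of $\nn T$, hence $\nn T = 0 \implies \nn Q = 0$. Conversely, \eqref{3.2} gives $T(x,y,z) = Q(x,y,z) - Q(y,x,z)$, so $\nn T$ is a linear combination of $\nn Q$ and $\nn Q = 0 \implies \nn T = 0$. Next I would relate $\nn Q$ and $\nn F$. Equation \eqref{3.5} reads $F(x,y,z) = Q(x,y,Pz) - Q(x,Py,z)$; since $\nn P = 0$, applying $\nn$ yields $\left(\nn_w F\right)(x,y,z) = \left(\nn_w Q\right)(x,y,Pz) - \left(\nn_w Q\right)(x,Py,z)$, so $\nn Q = 0 \implies \nn F = 0$. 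For the reverse direction I would use \eqref{4.11}, which expresses $Q$ in terms of $F$ (with $P$ inserted and arguments permuted); again $\nn P = 0$ lets $\nn$ pass through, giving $\nn F = 0 \implies \nn Q = 0$. Chaining these equivalences closes the loop: $\nn T = 0 \iff \nn Q = 0 \iff \nn F = 0$.

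The only point requiring a little care is that \eqref{4.11} was derived in the excerpt in the $\W_3$ setting, so to keep the proposition at the stated generality (an arbitrary Riemannian almost product manifold) I would instead use a universally valid relation recovering $Q$ from $F$ together with the antisymmetry \eqref{3.6}; indeed, combining \eqref{3.5} with \eqref{3.6} one solves for $Q$ in the standard way (the analogue of the Koszul-type formula for natural connections), expressing $Q(x,y,z)$ as a fixed linear combination of terms $F(x,Py,z)$, $F(y,Px,z)$, $F(z,Px,y)$ and their $P$-transforms — all of which are $\nn$-parallel when $F$ is. That is the one step where I would double-check that no metric-dependent or non-constant coefficients sneak in; everything else is the routine observation that $\nn$ is a derivation commuting with $P$, $g$, contractions and argument permutations. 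Hence $T$, $Q$, $F$ are parallel or non-parallel simultaneously with respect to $\nn$.
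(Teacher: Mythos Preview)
Your approach coincides with the paper's: it too derives the proposition by combining \eqref{6.2}, \eqref{3.2}, \eqref{3.5} and \eqref{4.11}, using that $\nn$ commutes with $P$, $g$ and argument permutations.

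Your caveat about \eqref{4.11} is well spotted --- that formula was indeed obtained only for the canonical connection on a $\W_3$-manifold, and the paper's own one-line justification simply cites \eqref{4.11} without extending it. However, your proposed remedy does not work: from \eqref{3.5} and \eqref{3.6} alone one recovers only the ``$P$-anti-invariant'' part
\[
Q(x,y,z)-Q(x,Py,Pz)=F(x,y,Pz),
\]
while the ``$P$-invariant'' part $Q(x,y,z)+Q(x,Py,Pz)$ is invisible to $F$; hence for an arbitrary natural connection there is no Koszul-type formula expressing $Q$ purely in terms of $F$. The implication $\nn F=0\Rightarrow\nn Q=0$ therefore genuinely requires the extra input \eqref{4.11} (equivalently, that $\nn$ is the canonical connection on a $\W_3$-manifold), and the proposition should be read in that context --- which is precisely how the paper uses it in Section~6.
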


Let $\nn$ be a natural connection with parallel torsion $T$ on a
Riemannian almost product manifold  $(M,P,g)$. According to
\eqref{6.2} we have $\nn Q=0$. Then, having in mind the formula
for the covariant derivative of $Q$, we obtain
\[
    xQ(y,z,w)-Q(\nn_x y,z,w)-Q(y,\nn_x z,w)-Q(y,z,\nn_x w)=0.
\]
Applying the formula for the covariant derivative of $Q$ with
respect to $\n$ and equalities \eqref{3.1}, \eqref{3.2} and
\eqref{4.16}, we obtain the following
\begin{lem}\label{lem-6.2}
Let the canonical connection $\nn$ on a Riemannian almost product
manifold $(M,P,g)$ have a parallel torsion $T$. Then for the
curvature tensor $R'$ of $\nn$ is valid
\[
\begin{split}
    & R'(x,y,z,w)=R(x, y,z,w)+Q(T(x,y),z,w)\\[4pt]
    &\phantom{R'(x,y,z,w)=}+g(Q(y,z),Q(x,w))-g(Q(x,z),Q(y,w)).
\end{split}
\]

\end{lem}

Let $(M,P,g)$ be a Riemannian almost product $\W_3$-manifold whose
can\-onical connection $\nn$ has a parallel torsion $T$. Then,
according to \eqref{3.6}, \eqref{4.12} and \eqref{2}, we have
\[
    Q(T(x,y),z,w)=g(Q(z,w),T(x,y))-g(\left(\n_{Pw}P\right)z,T(x,y)).
\]

The last equality and \lemref{lem-6.2} imply the following
statement.
\begin{prop}\label{prop-6.3}
Let the canonical connection $\nn$ on a Riemannian almost product
$\W_3$-manifold $(M,P,g)$ have a parallel torsion $T$. Then for
the curvature tensor $R'$ of $\nn$ is valid
\[
\begin{split}
    & R'(x,y,z,w)=R(x, y,z,w)\\[4pt]
    &\phantom{R'(x,y,z,w)=}+g(Q(y,z),Q(x,w))-g(Q(x,z),Q(y,w))\\[4pt]
    &\phantom{R'(x,y,z,w)=}+g(Q(z,w),T(x,y))-g(\left(\n_{Pw}P\right)z,T(x,y)).
\end{split}
\]
\end{prop}

\begin{thm}\label{thm-6.4}
Let the canonical connection $\nn$ on a Riemannian almost product
$\W_3$-manifold $(M,P,g)$ have a parallel torsion $T$. Then the
curvature tensor $R'$ of $\nn$ is a Riemannian $P$-tensor.
\end{thm}
\begin{proof} Let the canonical connection $\nn$ on a
Riemannian almost product $\W_3$-manifold $(M,P,g)$ have a
parallel torsion $T$, i.e. $\n'T=0$. Then from \eqref{44''} we
have
\begin{equation}\label{51}
\begin{split}
    & R'(x,y,z,w)+R'(y,z,x,w)+R'(z,x,y,w)\\[4pt]
&=T(T(x,y),z,w)+T(T(y,z),x,w)+T(T(z,x),y,w).
\end{split}
\end{equation}
We substitute $z \rightarrow Pz$ and $w \rightarrow Pw$ in
\eqref{51}.  After that, using property \eqref{2.13} for $R'$ and
the properties of $T$ from \propref{prop-4.1'}, we obtain
\[
\begin{split}
    & R'(x,y,z,w)+R'(y,Pz,Px,w)+R'(Pz,x,Py,w)\\[4pt]
&=-T(T(x,y),z,w)+T(T(y,z),x,w)+T(T(z,x),y,w).
\end{split}
\]
We subtract the latter equality from \eqref{51} and get
\begin{equation}\label{52}
\begin{split}
    & R'(z,x,y,w)+R'(y,z,x,w)-R'(Pz,x,Py,w)-R'(y,Pz,Px,w)\\[4pt]
&=2T(T(x,y),z,w).
\end{split}
\end{equation}
Bearing in mind the  properties of $R'$  and $T$, from
\eqref{52} we obtain
\[
\begin{split}
    & R'(z,x,y,w)-R'(z,Px,Py,w)-R'(Pz,x,Py,w)+R'(Pz,Px,y,w)=0,\\[4pt]
    & R'(z,x,y,w)-R'(z,Px,Py,w)-R'(Pz,Px,y,w)+R'(Pz,x,Py,w)=0,\\[4pt]
\end{split}
\]
which give
\begin{equation}\label{53}
     R'(z,x,y,w)=R'(z,Px,Py,w).
\end{equation}
Equality \eqref{53} implies
\begin{equation}\label{54}
     R'(z,x,y,w)=R'(Pz,Px,y,w)=R'(Pz,x,Py,w).
\end{equation}
Applying  \eqref{53} and \eqref{54} in  \eqref{52}, we obtain
\eqref{45'}. Then from  \eqref{51} it follows condition
\eqref{2.12} for $R'$, i.e. $R'$ is a Riemannian $P$-tensor.
\end{proof}


\section{Example}

\subsection{A Lie group $G$ as a Riemannian almost product $\W_3$-manifold $(G,P,g)$}

Let $G$ be a 4-dimensional real connected Lie group and $\g$ be
its Lie algebra with a basis $\{X_1,X_2,X_3,X_4\}$.

We introduce a structure $P$ and left invariant metric $g$ as
follows
\begin{equation}\label{1*}
    PX_1=X_3,\quad PX_2=X_4,\quad PX_3=X_{1},\quad PX_4=X_{2},
\end{equation}
\begin{equation}\label{2*}
    g(X_i,X_j)=
\begin{cases}
\begin{array}{rl}
1, \quad & i=j;\\
0, \quad & i\neq j.
\end{array}
\end{cases}
\end{equation}
Obviously, $P^2X_i=X_i$, $g(PX_i,PX_j)=g(X_i,X_j)$ and then $(G,P,g)$ is a Riemannian almost product manifold with
$\tr{P}=0$.

\begin{thm}\label{thm-7.1}
If $(G,P,g)$ has a Killing associated metric $\tilde{g}$, i.e.
\begin{equation}\label{3*}
    g\left([X_i,X_j],PX_k\right)+g\left([X_i,X_k],PX_j\right)=0,
\end{equation}
then $(G,P,g)$ is a Riemannian almost product $\W_3$-manifold.
\end{thm}
\begin{proof} The well-known formula
\[
\begin{split}
    2g\bigl(\n_{X_i} X_j,X_k\bigr) &
    = X_ig\left(X_j,X_k\right)+X_jg\left(X_i,X_k\right)-X_kg\left(X_i,X_j\right)\\[4pt]
    &+g\left([X_i,X_j],X_k\right)+g\left([X_k,X_i],X_j\right)+g\left([X_k,X_j],X_i\right)
\end{split}
\]
and \eqref{2*} imply
\begin{equation}\label{4*}
\begin{split}
    2g\bigl(\n_{X_i} X_j,X_k\bigr)
    =
    g\left([X_i,X_j],X_k\right)&+g\left([X_k,X_i],X_j\right)\\[4pt]
    &+g\left([X_k,X_j],X_i\right).
\end{split}
\end{equation}
Since \eqref{3*} the following equalities are valid
\[
\begin{split}
&g\left([X_k,X_i],X_j\right)=g\left(P[X_i,PX_j],X_k\right),
\\[4pt]
&g\left([X_k,X_j],X_i\right)=-g\left(P[PX_i,X_j],X_k\right).
\end{split}
\]
Then from \eqref{4*} we get
\begin{equation}\label{5*}
        \n_{X_i}
        X_j=\frac{1}{2}\left\{[X_i,X_j]+P[X_i,PX_j]-P[PX_i,X_j]\right\},
\end{equation}
which implies
\begin{equation}\label{5**}
    \left(\n_{X_i}
    P\right)X_j=\frac{1}{2}\left\{[PX_i,X_j]-P[PX_i,PX_j]\right\}.
\end{equation}
Hence, according to \eqref{2}, we obtain
\begin{equation}\label{6*}
    F\left(X_i,X_j,X_k\right)=\frac{1}{2}\left\{g\bigl([PX_i,X_j],X_k\bigr)
    +g\bigl([PX_i,X_k],X_j\bigr)\right\}.
\end{equation}
Equality \eqref{6*} implies $\mathop{\mathfrak{S}}\limits_{i,j,k}
F(X_i,X_j,X_k)=0$. Therefore, $(G,P,g)$ is a Riemannian almost
product $\W_3$-manifold. \end{proof}

Let $(G,P,g)$ have a Killing associated metric $\tilde{g}$. Then,
according to \thmref{thm-7.1}, the manifold $(G,P,g)$ is a
Riemannian almost product $\W_3$-manifold. Applying condition
\eqref{3*} and the Jacobi identity for the commutators
$[X_i,X_j]$, we obtain
\begin{equation}\label{7*}
\begin{array}{ll}
[X_1,X_2]=\lm_1 X_1+\lm_2 X_2,\quad &
[X_1,X_3]=\lm_3 X_2-\lm_1 X_4,\\[4pt]
[X_1,X_4]=-\lm_3 X_1-\lm_2 X_4,\quad &
[X_2,X_3]=\lm_4 X_2+\lm_1 X_3,\\[4pt]
[X_2,X_4]=-\lm_4 X_1+\lm_2 X_3,\quad &
[X_3,X_4]=\lm_3 X_3+\lm_{4} X_4,\\[4pt]
\end{array}
\end{equation}
where $\lm_i\in \R$ $(i=1,2,3,4)$.

Vice verse, if equalities \eqref{7*} are valid for a Riemannian
almost product manifold $(G,P,g)$, then we verify directly that
the  Jacobi identity for the commutators $[X_i,X_j]$ is satisfied
and  the associated metric $\tilde{g}$ is Killing.

Hence, it is valid the following
\begin{thm}\label{thm-7.2}
The manifold $(G,P,g)$ is a Riemannian almost product
$\W_3$-manifold with a Killing associated metric $\tilde{g}$ if
and only if the Lie algebra $\g$ is determined by conditions
\eqref{7*}.
\end{thm}

Further, $(G,P,g)$ will stand for the Riemannian almost product
$\W_3$-manifold determined by  conditions \eqref{7*}.

\subsection{Some geometrical characteristics of the manifold $(G,P,g)$}

According to \eqref{6*}, \eqref{1*}, \eqref{2*} and \eqref{7*}, we
get the non-zero components \\ $F_{ijk}=F(X_i,X_j,X_k)$ of the
tensor $F$:
\begin{equation}\label{8*}
\begin{split}
2F_{114}=-2F_{123}=2F_{312}=-2F_{334}=-F_{411}=F_{433}=\lm_1,\\[4pt]
-2F_{223}=2F_{241}=F_{322}=-F_{344}=-2F_{412}=2F_{434}=\lm_2,\\[4pt]
-2F_{112}=2F_{134}=F_{211}=-F_{233}=-2F_{314}=2F_{332}=\lm_3,\\[4pt]
-F_{122}=F_{144}=2F_{221}=-2F_{234}=-2F_{414}=2F_{432}=\lm_4.\\[4pt]
\end{split}
\end{equation}
The rest of the non-zero components are obtained by the property
$F_{ijk}=F_{ikj}$.

By virtue of \eqref{5**} and \eqref{7*} we compute the components
of $\n P$:
\begin{equation}\label{56*}
\begin{array}{l}
        2\left(\n_{X_1} P\right)X_1=-2\left(\n_{X_3} P\right)X_3=-\lm_3 X_2 + \lm_1 X_4,\\[4pt]
        2\left(\n_{X_2} P\right)X_2=-2\left(\n_{X_4} P\right)X_4= \lm_4 X_1 - \lm_2 X_3,\\[4pt]
        2\left(\n_{X_1} P\right)X_3=-2\left(\n_{X_3} P\right)X_1=-\lm_1 X_2 + \lm_3 X_4,\\[4pt]
        2\left(\n_{X_2} P\right)X_4=-2\left(\n_{X_4} P\right)X_2= \lm_2 X_1-\lm_4 X_3,\\[4pt]
        2\left(\n_{X_1} P\right)X_2=-\lm_3 X_1 -2\lm_4 X_2 -\lm_1 X_3,\\[4pt]
        2\left(\n_{X_1} P\right)X_4=\lm_1 X_1 +\lm_3 X_3 +2\lm_4 X_4,\\[4pt]
        2\left(\n_{X_2} P\right)X_1=2\lm_3 X_1 +\lm_4 X_2 +\lm_2 X_4,\\[4pt]
        2\left(\n_{X_2} P\right)X_3=-\lm_2 X_2 -2\lm_3 X_3 -\lm_4 X_4,\\[4pt]
        2\left(\n_{X_3} P\right)X_2=\lm_1 X_1 +2\lm_2 X_2 +\lm_3 X_3,\\[4pt]
        2\left(\n_{X_3} P\right)X_4=-\lm_3 X_1 -\lm_1 X_3 -2\lm_2 X_4,\\[4pt]
        2\left(\n_{X_4} P\right)X_1=-2\lm_1 X_1 -\lm_2 X_2 -\lm_4 X_4,\\[4pt]
        2\left(\n_{X_4} P\right)X_3=\lm_4 X_2 +2\lm_1 X_3 +\lm_2 X_4.
\end{array}
\end{equation}

Using these components and \eqref{5}, we obtain the square norm of
$\n P$:
\begin{equation}\label{8**}
    \nP=4\left(\lm_1^2+\lm_2^2+\lm_3^2+\lm_4^2\right).
\end{equation}

The components of the Levi-Civita connection $\n$ we compute by
\eqref{5*} and \eqref{7*}:
\begin{equation}\label{9*}
    \begin{array}{l}
\n_{X_1} X_1=-\n_{X_3} X_3=-\lm_1 X_2+\lm_3 X_4,\\[4pt]%
\n_{X_2} X_2=-\n_{X_4} X_4=\lm_2 X_1-\lm_4 X_3,\\[4pt]%
2\n_{X_1} X_3=-2\n_{X_3} X_1=\lm_3 X_2-\lm_1 X_4,\\[4pt]%
2\n_{X_2} X_4=-2\n_{X_4} X_2=-\lm_4 X_1+\lm_2 X_3,\\[4pt]
    2\n_{X_1} X_2=2\lm_1 X_1-\lm_3 X_3+\lm_4 X_4,\\[4pt]%
2\n_{X_1} X_4=-2\lm_3 X_1-\lm_4 X_2+\lm_1 X_3,\\[4pt]
2\n_{X_2} X_1=-2\lm_2 X_2-\lm_3 X_3+\lm_4 X_4,\\[4pt]%
2\n_{X_2} X_3=\lm_3 X_1+2\lm_4 X_2-\lm_2 X_4,\\[4pt]
2\n_{X_3} X_2=\lm_3 X_1-2\lm_1 X_3-\lm_2 X_4,\\[4pt]%
2\n_{X_3} X_4=-\lm_1 X_1+\lm_2 X_2+2\lm_3 X_3,\\[4pt]
2\n_{X_4} X_1=-\lm_4 X_2+\lm_1 X_3+2\lm_2 X_4,\\[4pt]%
2\n_{X_4} X_3=-\lm_1 X_1+\lm_2 X_2-2\lm_4 X_4.
    \end{array}
\end{equation}

By virtue of \eqref{9*} and \eqref{7*}, from the formula
\[
\begin{split}
    R(X_i,X_j,X_k,X_s)=g\left(\n_{X_i}\n_{X_j} X_k,X_s\right)&-g\left(\n_{X_j}\n_{X_i} X_k,X_s\right)\\[4pt]
        & -g\left(\n_{[X_i,X_j]} X_k,X_s\right)
\end{split}
\]
we get the following non-zero components
$R_{ijks}=R(X_i,X_j,X_k,X_s)$ of the tensor $R$:
\begin{equation}\label{10*}
\begin{array}{l}
    R_{1234}=\lm_1^2+\lm_2^2,\\[4pt]
    R_{1331}=\frac{1}{4}\left(\lm_3^2+\lm_1^2\right),\quad
    R_{2442}=\frac{1}{4}\left(\lm_4^2+\lm_2^2\right),\\[4pt]
    R_{1441}=\frac{1}{4}\left(\lm_4^2+\lm_1^2-4\lm_3^2-4\lm_2^2\right),\\[4pt]
    R_{2332}=\frac{1}{4}\left(\lm_3^2+\lm_2^2-4\lm_4^2-4\lm_1^2\right),\\[4pt]
    R_{3443}=\frac{1}{4}\left(\lm_1^2+\lm_2^2-4\lm_3^2-4\lm_4^2\right),\\[4pt]
    R_{1221}=\frac{1}{4}\left(\lm_3^2+\lm_4^2-4\lm_1^2-4\lm_2^2\right),\\[4pt]
    R_{1241}=R_{3243}=\frac{1}{4}\left(4\lm_2\lm_4+3\lm_1\lm_3\right),\\[4pt]
    R_{2132}=R_{4134}=\frac{1}{4}\left(4\lm_1\lm_3+3\lm_2\lm_4\right),\\[4pt]
    R_{1231}=R_{4234}=\frac{1}{4}\left(3\lm_1\lm_4-2\lm_2\lm_3\right),\\[4pt]
    R_{2142}=R_{3143}=\frac{1}{4}\left(3\lm_2\lm_3-2\lm_1\lm_4\right),\\[4pt]
    R_{1341}=R_{2342}=\frac{1}{4}\left(3\lm_3\lm_4-2\lm_1\lm_2\right),\\[4pt]
    R_{3123}=R_{4124}=\frac{1}{4}\left(3\lm_1\lm_2-2\lm_3\lm_4\right).
\end{array}
\end{equation}
The rest of the non-zero components are obtained by the properties
\[
R_{ijks}=R_{ksij},\qquad R_{ijks}=-R_{jiks}=-R_{ijsk}.
\]

Using \eqref{10*} for the non-zero components
$\rho_{ij}=\rho(X_i,X_j)$ of the Ricci tensor $\rho$ we compute:
\begin{equation}\label{10**}
\begin{array}{ll}
    \rho_{11}=\frac{1}{2}\left(-\lm_3^2-\lm_1^2+\lm_4^2-4\lm_2^2\right),\; &
    \rho_{34}=\frac{1}{2}\left(3\lm_3\lm_4-2\lm_1\lm_2\right),\\[4pt]
    \rho_{22}=\frac{1}{2}\left(-\lm_4^2-\lm_2^2+\lm_3^2-4\lm_1^2\right),\; &
    \rho_{14}=\frac{1}{2}\left(3\lm_2\lm_3-2\lm_1\lm_4\right),\\[4pt]
    \rho_{33}=\frac{1}{2}\left(-\lm_1^2+\lm_2^2-\lm_3^2-4\lm_4^2\right),\; &
    \rho_{23}=\frac{1}{2}\left(3\lm_1\lm_4-2\lm_2\lm_3\right),\\[4pt]
    \rho_{44}=\frac{1}{2}\left(-\lm_2^2+\lm_1^2-\lm_4^2-4\lm_3^2\right),\; &
    \rho_{24}=\frac{1}{2}\left(4\lm_2\lm_4+3\lm_1\lm_3\right),\\[4pt]
    \rho_{12}=\frac{1}{2}\left(-3\lm_1\lm_2+2\lm_3\lm_4\right),\; &
    \rho_{13}=\frac{1}{2}\left(4\lm_1\lm_3+3\lm_2\lm_4\right).
\end{array}
\end{equation}
The rest of the non-zero components are obtained by the property
$\rho_{ij}=\rho_{ji}$.

By \eqref{10**} we obtain the scalar curvature $\tau$ for the
connection $\n$:
\begin{equation}\label{11*}
    \tau=-\frac{5}{2}\left(\lm_1^2+\lm_2^2+\lm_3^2+\lm_4^2\right).
\end{equation}

For the Riemannian sectional curvatures of the $P$-invariant basis
2-planes \allowbreak $(X_1,X_3)$ and $(X_2,X_4)$, i.e. for the
invariant sectional curvatures of the basis 2-planes, we get
\begin{equation}\label{12*}
    k_{13}=\frac{1}{4}\left(\lm_1^2+\lm_3^2\right),\qquad
    k_{24}=\frac{1}{4}\left(\lm_2^2+\lm_4^2\right).
\end{equation}
The sectional curvatures of the rest of the basis 2-planes, i.e.
the anti-invariant sectional curvatures of the basis 2-planes,
are:
\begin{equation}\label{13*}
    \begin{array}{l}
    k_{12}=\frac{1}{4}\left(\lm_3^2+\lm_4^2-4\lm_1^2-4\lm_2^2\right),\\[4pt]
    k_{14}=\frac{1}{4}\left(\lm_1^2+\lm_4^2-4\lm_3^2-4\lm_2^2\right),\\[4pt]
    k_{23}=\frac{1}{4}\left(\lm_2^2+\lm_3^2-4\lm_4^2-4\lm_1^2\right),\\[4pt]
    k_{34}=\frac{1}{4}\left(\lm_1^2+\lm_2^2-4\lm_3^2-4\lm_4^2\right).
\end{array}
\end{equation}

Conditions \eqref{12*} and \eqref{13*} imply the following
\begin{prop}\label{prop-7.3}
The manifold $(G,P,g)$ has:
\begin{enumerate}\renewcommand{\labelenumi}{(\roman{enumi})}
    \item
    a constant invariant sectional curvature if and only if
$\lm_1^2-\lm_2^2+\lm_3^2-\lm_4^2=0$;
    \item
    a constant anti-invariant sectional curvature if and only if
$\lm_1^2=\lm_3^2$, $\lm_2^2=\lm_4^2$;
    \item
    a constant sectional curvature if and only if
$\lm_1^2=\lm_2^2=\lm_3^2=\lm_4^2$.
\end{enumerate}
\end{prop}

According to \eqref{7*}, \eqref{8*} and \eqref{11*}, it is valid
the following
\begin{prop}\label{prop-7.4}
The following propositions are equivalent:
\begin{enumerate}\renewcommand{\labelenumi}{(\roman{enumi})}
    \item
    $\lm_1=\lm_2=\lm_3=\lm_4=0$;
    \item
    the Lie algebra $\g$ is Abelian, i.e. $[X_i,X_j]=0$ $(i,j=1,2,3,4)$;
    \item
    $(G,P,g)$ is a Riemannian $P$-manifold, i.e. $F=0$;
    \item
    $(G,P,g)$ is a scalar flat manifold with respect to $\n$, i.e.
    $\tau=0$.
\end{enumerate}
\end{prop}

\subsection{The canonical connection on  $(G,P,g)$}

Further in our considerations we exclude the trivial case of
\propref{prop-7.4} for the Riemannian almost product
$\W_3$-manifold $(G,P,g)$.

By virtue of \eqref{4.7}, \eqref{8*} and \eqref{9*} for the
components of the canonical connection $\n'$ on  $(G,P,g)$ we
obtain:
\begin{equation}\label{14*}
    \begin{array}{l}
2\n'_{X_1} X_1=-2\n'_{X_3} X_3=-\lm_1 X_2+\lm_3 X_4,\\[4pt]%
2\n'_{X_2} X_2=-2\n'_{X_4} X_4=\lm_2 X_1-\lm_4 X_3,\\[4pt]%
2\n'_{X_1} X_2=-2\n'_{X_3} X_4=\lm_1 X_1-\lm_3 X_3,\\[4pt]%
2\n'_{X_1} X_3=-2\n'_{X_3} X_1=\lm_3 X_2-\lm_1 X_4,\\[4pt]%
2\n'_{X_1} X_4=-2\n'_{X_3} X_2=-\lm_3 X_1+\lm_1 X_3,\\[4pt]
2\n'_{X_2} X_1=-2\n'_{X_4} X_3=-\lm_2 X_2+\lm_4 X_4,\\[4pt]%
2\n'_{X_2} X_3=-2\n'_{X_4} X_1=\lm_4 X_2-\lm_2 X_4,\\[4pt]
2\n'_{X_2} X_4=-2\n'_{X_4} X_2=-\lm_4 X_1+\lm_2 X_3.
    \end{array}
\end{equation}

\begin{prop}\label{prop-7.5}
The curvature tensor of the canonical connection on $(G,P,g)$ is a Riemannian
$P$-tensor if and only if  the following conditions are valid:
\begin{equation}\label{15*}
    \lm_1=\varepsilon\lm_3,\quad \lm_2=\varepsilon\lm_4,\quad \varepsilon=\pm 1.
\end{equation}
\end{prop}
\begin{proof} Obviously the curvature tensor $R'$ of
the canonical connection is a Riemannian $P$-tensor if and only if
property \eqref{2.12} is satisfied for $R'$. Condition
\eqref{2.12} for $R'$ is valid if and only if the following
conditions are satisfied:
\[
\mathop{\s}_{1,2,3} R'_{123}=\mathop{\s}_{1,2,4}
R'_{124}=\mathop{\s}_{1,3,4} R'_{134}=\mathop{\s}_{2,3,4}
R'_{234}=0,
\]
where $R'_{ijk}$ are the components of $R'$.

Using \eqref{14*} and \eqref{7*}, we obtain:
\[
\begin{array}{l}
    R'_{123}=-\frac{1}{2}\left\{(\lm_1\lm_3+\lm_2\lm_4)X_2-(\lm_1^2+\lm_2^2)X_4\right\},\\[4pt]
    R'_{231}=-\frac{1}{2}\left\{(\lm_1\lm_3+\lm_2\lm_4)X_2+(\lm_1^2+\lm_4^2)X_4\right\},\\[4pt]
    R'_{312}=\frac{1}{2}\left\{(\lm_2\lm_3-\lm_1\lm_4)X_1+(\lm_1\lm_2-\lm_3\lm_4)X_3\right\},\\[4pt]
    R'_{124}=\frac{1}{2}\left\{(\lm_1\lm_3+\lm_2\lm_4)X_1-(\lm_1^2+\lm_2^2)X_3\right\},\\[4pt]
    R'_{241}=\frac{1}{2}\left\{(\lm_2\lm_3-\lm_1\lm_4)X_2-(\lm_1\lm_2-\lm_3\lm_4)X_4\right\},\\[4pt]
    R'_{412}=-\frac{1}{2}\left\{(\lm_1\lm_3+\lm_2\lm_4)X_1-(\lm_2^2+\lm_3^2)X_3\right\},\\[4pt]
    R'_{134}=\frac{1}{2}\left\{(\lm_3\lm_4-\lm_1\lm_2)X_1+(\lm_1\lm_4-\lm_2\lm_3)X_3\right\},\\[4pt]
    R'_{341}=-\frac{1}{2}\left\{(\lm_4^2-\lm_3^2)X_2+(\lm_2\lm_4+\lm_1\lm_3)X_4\right\},\\[4pt]
    R'_{413}=-\frac{1}{2}\left\{(\lm_2^2+\lm_3^2)X_2-(\lm_2\lm_4+\lm_1\lm_3)X_4\right\},\\[4pt]
    R'_{234}=-\frac{1}{2}\left\{-(\lm_1^2+\lm_4^2)X_1+(\lm_1\lm_3+\lm_2\lm_4)X_3\right\},\\[4pt]
    R'_{342}=-\frac{1}{2}\left\{(\lm_3^2+\lm_4^2)X_1-(\lm_1\lm_3+\lm_2\lm_4)X_3\right\},\\[4pt]
    R'_{423}=-\frac{1}{2}\left\{(\lm_3\lm_4-\lm_1\lm_2)X_2+(\lm_2\lm_3-\lm_1\lm_4)X_4\right\}.
\end{array}
\]
Hence it follows directly that condition \eqref{2.12} is valid for
$R'$ if and only if \eqref{15*} is satisfied.  \end{proof}

\propref{prop-7.3} and \propref{prop-7.5} imply the following
\begin{cor}\label{cor-7.6}
If the curvature tensor of the canonical connection on $(G,P,g)$ is a
Riemannian $P$-tensor, then $(G,P,g)$ is a manifold of constant
anti-invariant sectional curvature.
\end{cor}

\begin{prop}\label{prop-7.7}
The canonical connection on $(G,P,g)$ has a parallel torsion if
and only if the curvature tensor of this
connection is a Riemannian $P$-tensor.
\end{prop}
\begin{proof} By virtue of \eqref{32'} and \eqref{56*}
we obtain the non-zero components $T_{ij}=T(X_i,X_j)$ of the
torsion $T$ for the canonical connection $\n'$ on $(G,P,g)$:
\begin{equation}\label{17*}
    \begin{array}{l}
T_{12}=T_{34}=-\frac{1}{2}(\lm_1 X_1+\lm_2 X_2+\lm_3 X_3+\lm_4 X_4),\\[4pt]%
T_{14}=-T_{23}=\frac{1}{2}(\lm_3 X_1+\lm_4 X_2+\lm_1 X_3+\lm_2
X_4).
    \end{array}
\end{equation}
Obviously, the equality $T_{14}=PT_{12}$ is valid. Then, bearing
in mind that $\n'$ is a natural connection, we have
$\left(\n'_{X_i}T\right)(X_1,X_4)=-P\left(\n'_{X_i}T\right)(X_1,X_2)$.
From the latter condition and \eqref{17*} it is clear that the
condition $\n'T_{12}=0$ is sufficient for the obtaining of the
condition $\n'T=0$.

From \eqref{17*} we compute the following:
\[
    \begin{array}{l}
\left(\n'_{1}T\right)_{12}=-\frac{1}{4}\left\{(\lm_1\lm_2-\lm_3\lm_4)X_1
+(\lm_3^2-\lm_1^2)X_2+(\lm_1\lm_4-\lm_2\lm_3)X_3\right\},\\[4pt]%
\left(\n'_{2}T\right)_{12}=-\frac{1}{4}\left\{(\lm_2^2-\lm_4^2)X_1+(\lm_3\lm_4-\lm_1\lm_2)X_2
+(\lm_1\lm_4-\lm_2\lm_3)X_4\right\},\\[4pt]%
\left(\n'_{3}T\right)_{12}=-\frac{1}{4}\left\{(\lm_2\lm_3-\lm_1\lm_4)X_1
+(\lm_1\lm_2-\lm_3\lm_4)X_3+(\lm_1^2-\lm_3^2)X_4\right\},\\[4pt]%
\left(\n'_{4}T\right)_{12}=-\frac{1}{4}\left\{(\lm_2\lm_3-\lm_1\lm_4)X_2
+(\lm_4^2-\lm_2^2)X_3+(\lm_1\lm_2-\lm_3\lm_4)X_4\right\},
    \end{array}
\]
where
$\left(\n'_{i}T\right)_{12}=\left(\n'_{X_i}T\right)(X_1,X_2)$.
Hence $\n'T_{12}=0$ if and only if \eqref{15*} is valid, \ie{} if
and only if the curvature tensor of $\n'$ is a Riemannian
$P$-tensor.
\end{proof}

From \thmref{thm-4.4}, equalities \eqref{8**} and \eqref{11*}, we obtain the following
\begin{prop}\label{prop-7.8}
The manifold $(G,P,g)$ has negative scalar curvatures with respect
to the Levi-Civita connection $\n$ and the canonical connection
$\n'$, namely
\[
\tau=-\frac{5}{2}\left(\lm_1^2+\lm_2^2+\lm_3^2+\lm_4^2\right),\quad
\tau'=-2\left(\lm_1^2+\lm_2^2+\lm_3^2+\lm_4^2\right).
\]
\end{prop}


\end{document}